\newcommand{\rrvert}{\vert}
\newcommand{\llvert}{\vert}
\newtheorem{thm}{Theorem}
\newtheorem{lem}{Lemma}
\newcommand{\eps}{\varepsilon}
\renewcommand{\Re}{\mathbb{R}}
\newcommand{\be}{\begin{equation}
} \newcommand{\ee} {
\end{equation}}
\newcommand{\ba}{\begin{aligned}}
\newcommand{\ea}{\end{aligned}}
\renewcommand{\P}{\mathbf{P}}
\newcommand{\E}{\mathbf{E}}
\begin{document}
\begin{frontmatter}

\title{Asymptotics of exponential moments of a~weighted local time of a
Brownian motion with small variance}
%
%
%
%
\markboth{A. Kulik, D. Sobolieva}{Asymptotics of exponential moments of a~weighted local time of a
Brownian motion}

\author[a]{\inits{A.}\fnm{Alexei}\snm{Kulik}}\email{kulik.alex.m@gmail.com}
\address[a]{Institute of Mathematics, National Academy of Sciences of \xch{Ukraine, Ukraine}{Ukraine}}

\author[b]{\inits{D.}\fnm{Daryna}\snm{Sobolieva}\corref{cor1}}\email{dsobolieva@yandex.ua}
\cortext[cor1]{Corresponding author.}
\address[b]{Taras Shevchenko National University of \xch{Kyiv, Ukraine}{Kyiv}}

\begin{abstract}
We prove a large deviation type estimate for the asymptotic behavior of
a weighted local time of $\varepsilon W$ as $\varepsilon\to0$.
\end{abstract}

%
\begin{keyword}
Local time\sep exponential moment\sep large deviations principle
\MSC[2010] 60J55\sep60F10\sep60H10
\end{keyword}

\received{9 December 2015}
%
\revised{22 February 2016}
%
\accepted{29 February 2016}
\publishedonline{5 April 2016}
\end{frontmatter}

\section{Introduction and the main result}

Let $\{W_t,t\geq0\}$ be a real-valued Wiener process, and $\mu$ be a
$\sigma$-finite measure on $\Re$ such that
\be\label{locfin}
\sup_{x\in\Re}\mu\bigl([x-1, x+1]\bigr)<\infty.
\ee
Recall that the \emph{local time $L_t^\mu(W)$ of the process $W$ with
the weight $\mu$} can be defined as the limit of the integral functionals
\be\label{appr}
L_t^{\mu_n}(W):=\int_0^t k_n(W_s)\, ds, \quad k_n(x):={\mu_n(dx)\over
dx}, \ n\geq1,
\ee
where $\mu_n$, $n\geq1$, is a sequence of absolutely continuous
measures such that
\[
\int_\Re f(x)\mu_n(dx)\to\int
_\Re f(x)\mu(dx)
\]
for all continuous $f$ with compact support, and \eqref{locfin} holds
for $\mu_n$, $n\geq1$, uniformly. The limit $L_t^\mu(W)$ exists in the
mean square sense due to the general results from the theory of
$W$-functionals; see \cite{dynkin}, Chapter 6. This definition also
applies to $\eps W$ instead of $W$ for any positive $\eps$. In what
follows, we will treat $\eps W$ as a Markov process whose initial value
may vary, and with
a slight abuse of notation, we denote by $\P_x$ the law of $\eps W$
with $\eps W_0=x$ and by $\E_x$ the expectation w.r.t.\ this law.

In this note, we study the asymptotic behavior as $\eps\to0$ of the
exponential moments of the family of weighted local times $L_t^{\mu
}(\eps W)$. Namely, we prove the following theorem.

\begin{thm}\label{t1} For arbitrary finite measure $\mu$ on $\Re$,
\be\label{lim}
\lim_{\eps\to0}\eps^2 \sup_{x\in\Re}\log\E_x e^{L_t^\mu(\eps W)}=\frac
{t}{2}\sup_{y\in\Re} \mu\bigl(\{y\}\bigr)^2.
\ee
For arbitrary $\sigma$-finite measure $\mu$ on $\Re$ that satisfies
\eqref{locfin},
\be\label{limsigma}
\sup_{x\in\Re}\limsup_{\eps\to0}\eps^2 \log\E_x e^{L_t^\mu(\eps
W)}=\frac{t}{2}\sup_{y\in\Re} \mu\bigl(\{y\}\bigr)^2.
\ee
\end{thm}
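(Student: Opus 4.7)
My plan is to treat the lower and upper bounds in \eqref{lim} separately, and then reduce the $\sigma$-finite assertion \eqref{limsigma} to the finite one \eqref{lim} by truncation.

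The \emph{lower bound} is essentially explicit. For every $y\in\Re$ the measure-theoretic inequality $\mu\ge\mu(\{y\})\delta_y$ gives the pathwise bound $L_t^\mu(\eps W)\ge\mu(\{y\})L_t^y(\eps W)$, where $L_t^y$ is the local time at the single point $y$ in the sense of \eqref{appr}. Choosing the starting point $x=y$, Brownian scaling together with L\'evy's identity $L^0_\cdot(B)\stackrel{d}{=}|B_\cdot|$ (for a standard Brownian motion $B$) yields the explicit Gaussian formula
\[
\E_y\exp\bigl(\mu(\{y\})L_t^y(\eps W)\bigr)=2\Phi\bigl(\mu(\{y\})\sqrt t/\eps\bigr)\exp\!\Bigl(\frac{\mu(\{y\})^2 t}{2\eps^2}\Bigr),
\]
so $\eps^2\log\E_y\exp(\mu(\{y\})L_t^y(\eps W))\to\mu(\{y\})^2 t/2$. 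Taking the supremum over $y$ gives the lower bounds in both \eqref{lim} and \eqref{limsigma}.

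For the \emph{upper bound} in \eqref{lim}, set $a^*=\sup_y\mu(\{y\})$; the target is $\limsup_{\eps\to 0}\eps^2\sup_x\log\E_x e^{L_t^\mu(\eps W)}\le ta^{*2}/2$. The strategy is to split off the dominant atoms. Fix $\delta>0$ and decompose $\mu=\mu_\delta^{\mathrm{big}}+\mu_\delta^{\mathrm{small}}$, where $\mu_\delta^{\mathrm{big}}$ collects the (finitely many, by finiteness of $\mu$) atoms of mass exceeding $\delta$ and the remainder satisfies $\sup_y\mu_\delta^{\mathrm{small}}(\{y\})\le\delta$. H\"older's inequality with exponents $p,q$, $1/p+1/q=1$, reduces the task to two separate estimates: (i) $\limsup_\eps\eps^2\log\sup_x\E_x\exp(p L_t^{\mu_\delta^{\mathrm{big}}}(\eps W))\le t(pa^*)^2/2$; and (ii) $\limsup_\eps\eps^2\log\sup_x\E_x\exp(q L_t^{\mu_\delta^{\mathrm{small}}}(\eps W))\le t(q\delta)^2/2$. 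Summing the two contributions and sending first $\delta\to 0$ and then $p\to 1^+$ then closes \eqref{lim}.

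The main obstacle is step (i): among finitely many distinct atoms the largest alone governs the exponential rate. The guiding picture is spectral: as $\eps\to 0$ the ground state of the Schr\"odinger generator $\tfrac{\eps^2}{2}\Delta+p\mu_\delta^{\mathrm{big}}$ localises around the deepest well on the length scale $\eps^2/(pa^*)$, which is much smaller than the fixed atom separations, so the top eigenvalue is $(pa^*)^2/(2\eps^2)+o(1/\eps^2)$. Probabilistically I would realise this by induction on the number of atoms, peeling off a non-maximal atom $a_K\delta_{y_K}$ via the strong Markov property at the first hitting time $\tau_K$ of $y_K$: on $\{\tau_K>t\}$ that atom plays no role and the inductive hypothesis applies, while the contribution on $\{\tau_K\le t\}$ is suppressed by the Gaussian tail bound $\P_x(\sup_{s\le t}|\eps W_s-x|\ge r)\le 2e^{-r^2/(2\eps^2 t)}$ (with $r=\min_i|y_i-y_K|$) combined with a crude a priori total-mass exponential-moment bound. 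Step (ii) is handled by a Khasminskii-type iteration exploiting the local-boundedness hypothesis \eqref{locfin}. Finally, for \eqref{limsigma} I would truncate $\mu_R=\mu|_{[x-R,x+R]}$, a finite measure by \eqref{locfin}; on $\{\sup_{s\le t}|\eps W_s-x|\le R\}$ one has the pathwise identity $L_t^\mu(\eps W)=L_t^{\mu_R}(\eps W)$ while the complement has probability $\le 2e^{-R^2/(2\eps^2 t)}$, so a Cauchy--Schwarz splitting together with \eqref{lim} for $\mu_R$ and $R\to\infty$ produces the upper bound.
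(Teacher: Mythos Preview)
Your overall architecture matches the paper's: explicit lower bound via a single atom, a H\"older split into ``large atoms'' versus a ``small remainder'', a Khasminskii-type bound for the remainder, and truncation to a bounded interval for the $\sigma$-finite case. The paper proceeds the same way on all of these points. (One minor slip: in step~(ii) the condition $\sup_y\mu_\delta^{\mathrm{small}}(\{y\})\le\delta$ does not by itself control $N(\mu_\delta^{\mathrm{small}},\gamma)$, which is what Khasminskii actually uses, and the constant you state is $1/2$ rather than the cruder Khasminskii constant. The paper instead chooses the decomposition so that $N(\nu,\gamma)<\chi$ directly. This is easy to fix and does not affect the scheme, since any bound tending to~$0$ with the decomposition suffices.)

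The genuine gap is step~(i), the finite sum of point masses. Your induction peels off a non-maximal atom at $y_K$ and asserts that on $\{\tau_K\le t\}$ the Gaussian tail $\P_x(\sup_{s\le t}|\eps W_s-x|\ge r)$ suppresses the contribution. But the estimate in~(i) has to be uniform in the starting point~$x$, and for $x=y_K$ one has $\tau_K=0$, so $\P_x(\tau_K\le t)=1$ and nothing is suppressed. More structurally, after the strong-Markov restart at $y_K$ you face the same $k$-atom problem over the remaining time, not a $(k{-}1)$-atom problem: the process can oscillate many times between $y_K$ and a maximal atom on $[0,t]$, and a single Gaussian tail factor (or any finite iterate of it) cannot control this for all~$t$. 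What is missing is a mechanism that bounds the \emph{number} of inter-atom transitions at the large-deviation scale.

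The paper supplies exactly that mechanism, but not by induction on atoms. It partitions $[0,t]$ into $N$ equal subintervals and, on each, considers the event that $\eps W$ remains in a neighbourhood of a single atom. On the union $C$ of these ``one-atom-per-window'' events the Markov property together with the single-atom identity gives the sharp rate $t\varDelta^2/2$. On the complement $D$, the path must oscillate by at least the minimal inter-atom gap $\rho$ within some window of length $t/N$; the LDP for $\eps W$ then yields $\P_x(\eps W\in D)\le\exp(-\rho^2N/(2t\eps^2))$, which for $N$ chosen of order $t^2\varDelta^2/\rho^2$ beats the crude Khasminskii exponential-moment bound used on $D$. This time-discretisation is the rigorous counterpart of your spectral heuristic; the peeling induction, as you sketched it, does not close.
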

We note that in this statement the measure $\mu$ can be changed to a
signed measure; in this case, in the right-hand side, only the atoms of
the positive part of $\mu$ should appear. We also note that, in the
$\sigma$-finite case, the uniform statement \eqref{lim} may fail; one
example of such a type is given in Section~\ref{s3}.

Let us briefly discuss the problem that was our initial motivation for
the study of such exponential moments. Consider the one-dimensional SDE
\begin{equation}
\label{SDE} dX^\eps_t=a\bigl(X^\eps_t
\bigr)\, dt+\eps\sigma\bigl(X^\eps_t\bigr) \,
dW_t
\end{equation}
with discontinuous coefficients $a, \sigma$. In \cite{kulik-sobolieva},
a Wentzel--Freidlin-type large deviation principle (LDP) was established
in the case $a\equiv0$ under mild assumptions on the diffusion
coefficient $\sigma$. In \cite{sobolieva}, this result was extended to
the particular class of SDEs such that the function $a/\sigma^2$ has a
bounded derivative. This limitation had appeared because of formula (7)
in \cite{sobolieva} for the \emph{rate transform} of the family $X^\eps
$. This formula contains an integral functional with kernel $(a/\sigma
^2)'$ of a certain diffusion process obtained from $\eps W$ by the time
change procedure. If $a/\sigma^2$ is not smooth but is a function of a
bounded variation, this integral function still can be interpreted as a
weighted local time with weight $\mu=(a/\sigma^2)'$. Thus, Theorem \ref
{t1} can be used in order to study the LDP for the SDE \eqref{SDE} with
discontinuous coefficients. One of such particular results can be
derived immediately. Namely, if $\mu$ is a \emph{continuous} measure,
then by Theorem \ref{t1} the exponential moments of $L_t^\mu(\eps W)$
are negligible at the logarithmic scale with rate function $\eps^2$.
This, after simple rearrangements, allows us to neglect the
corresponding term in (7) of \cite{sobolieva} and to obtain the
statement of Theorem 2.1 of \cite{sobolieva} under the weaker condition
that $a/\sigma^2$ is a continuous function of bounded variation. The
problem how to describe in a more general situation the influence of
the jumps of $a/\sigma^2$ on the LDP\vadjust{\eject} for the solution to \eqref{SDE}
still remains open and is the subject of our ongoing research. We just
remark that due to Theorem \ref{t1} the respective integral term is no
longer negligible, which well corresponds to the LDP results for
piecewise smooth coefficients $a, \sigma$ obtained in \cite
{chiang-sheu1,chiang-sheu2,krykun}.

\section{Proof of Theorem \ref{t1}}

\subsection{Preliminaries}\label{s21}

For a measure $\nu$ satisfying \eqref{locfin}, denote by
\be\label{char}
f^{\nu, \eps}_t(x)=\E_x L_t^\nu(\eps W)=\int_0^t\int_{\Re}\frac
{1}{\sqrt{2\pi s\eps^2}}e^{-\frac{(y-x)^2}{2s\eps^2}}\nu(dy)\,ds, \quad
t\geq0, \ x\in\Re,
\ee
the characteristic of the local time $L^\nu(\eps W)$ considered as a
$W$-functional of~$\eps W$; see \cite{dynkin}, Chapter 6.

The following statement is a version of \emph{Khas'minskii's lemma};
see \cite{sznitman}, Section~1.2.

\begin{lem}\label{Khlem} Suppose that
\be\label{12}
\sup_{x\in\Re} f^{\nu, \eps}_s(x)\leq{1\over2}.
\ee
Then
\[
\sup_{x\in\Re} \E_x e^{L_s^\nu(\eps W)}\leq2.
\]
\end{lem}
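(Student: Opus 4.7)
The plan is to follow the classical Khas'minskii argument (see \cite{sznitman}). The heart of the matter is the factorial moment bound
\[
\E_x \bigl(L^\nu_s(\eps W)\bigr)^n \leq n!\,\Bigl(\sup_{x\in\Re} f^{\nu,\eps}_s(x)\Bigr)^n,
\]
after which the power-series expansion of the exponential is, under \eqref{12}, dominated by the geometric series $\sum_{n\geq 0} 2^{-n}=2$.

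I would first carry out this moment estimate in the case where $\nu$ has a bounded density $k$, so that $L_s^\nu(\eps W)=\int_0^s k(\eps W_u)\,du$ is an ordinary path integral. Fubini rewrites the $n$-th moment as
\[
n!\,\E_x \int_{0 \leq u_1 \leq \cdots \leq u_n \leq s} \prod_{i=1}^n k(\eps W_{u_i})\,du_1\cdots du_n.
\]
Conditioning on $\mathcal{F}_{u_{n-1}}$ and using the Markov property of $\eps W$, the conditional expectation of the innermost integral equals $f^{\nu,\eps}_{s-u_{n-1}}(\eps W_{u_{n-1}}) \leq \sup_{x\in\Re} f^{\nu,\eps}_s(x)$; iterating this step $n$ times yields the factorial bound.

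For a general $\nu$ satisfying \eqref{locfin}, I would approximate via a sequence $\nu_n$ of absolutely continuous measures with bounded densities, as in \eqref{appr}, whence $L_s^{\nu_n}(\eps W)\to L_s^\nu(\eps W)$ in $L^2(\P_x)$ and, along a subsequence, $\P_x$-almost surely. Fatou's lemma then gives
\[
\E_x e^{L_s^\nu(\eps W)}\leq \liminf_n \Bigl(1-\sup_{x\in\Re} f^{\nu_n,\eps}_s(x)\Bigr)^{-1}.
\]
The main obstacle is transferring the hypothesis \eqref{12} to the approximants, since weak convergence of $\nu_n$ to $\nu$ does not by itself deliver uniform-in-$x$ control of the Gaussian integrals in \eqref{char}. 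A clean remedy is to take $\nu_n$ to be the mollification of $\nu$ by a symmetric mollifier $\phi_n$; a direct Fubini computation then shows that $f^{\nu_n,\eps}_s=f^{\nu,\eps}_s*\phi_n$, whence $\sup_{x\in\Re}f^{\nu_n,\eps}_s(x)\leq \sup_{x\in\Re} f^{\nu,\eps}_s(x)\leq 1/2$, and the argument closes.
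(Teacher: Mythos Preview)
Your argument is correct and is precisely the classical Khas'minskii computation. The paper itself does not prove this lemma: it merely states it and refers to \cite{sznitman}, Section~1.2, so there is no ``paper's proof'' to compare against beyond that citation. Your iterated-conditioning moment bound and the mollification device $f^{\nu*\phi_n,\eps}_s=f^{\nu,\eps}_s*\phi_n$ (which indeed preserves the supremum since $\phi_n$ is a probability density) are both sound.

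One minor remark: the approximation step can be bypassed entirely. For any nonnegative continuous additive functional $A$ of the Markov process $\eps W$, the pathwise identity
\[
A_s^{\,n}=n!\int_{0<u_1<\cdots<u_n<s} dA_{u_1}\cdots dA_{u_n}
\]
holds, and the same conditioning on $\mathcal{F}_{u_{n-1}}$ together with the additivity $A_s-A_{u_{n-1}}\stackrel{d}{=}A_{s-u_{n-1}}$ under the shifted law yields $\E_x A_s^{\,n}\le n!\bigl(\sup_y\E_y A_s\bigr)^n$ directly. This is how the argument is usually presented in \cite{sznitman} and avoids having to control the characteristics of the approximants.
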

Using the Markov property, as a simple corollary, we obtain, for
arbitrary $t>0$,
\be\label{expmom}
\sup_{x\in\Re} \E_x e^{L_{t}^\nu(\eps W)}\leq2^{1+t/s}=2e^{(\log2)(t/s)},
\ee
where $s>0$ is such that \eqref{12} holds. This inequality, combined
with \eqref{char}, leads to the following estimate.
\begin{lem}\label{l2} For a nonzero measure $\nu$ satisfying \eqref
{locfin}, denote
\[
N(\nu,\gamma)=\sup_{x\in\Re}\nu\bigl([x-\gamma, x+\gamma]\bigr),
\quad\gamma>0.
\]
For any $\lambda\geq1$ and $\gamma>0$, there exists $\eps_{\lambda,
\gamma}>0$ such that \be\label{aa}
\sup_{x\in\Re}\E_x e^{\lambda L_{t}^{\nu}(\eps W)}\leq2e^{(4 \log2)
c_0 N(\nu, \gamma)^2 t \lambda^2\eps^{-2}},\quad \eps\in(0,\eps
_{\lambda, \gamma}),
\ee
with
\[
c_0={\frac{2}{\pi}} \Biggl(1+2\sum
_{k=1}^\infty e^{-\frac
{(2k-1)^2}{2}} \Biggr)^2.
\]
\end{lem}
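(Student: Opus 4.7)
The plan is to apply the Markov-type corollary \eqref{expmom} with the scaled measure $\lambda\nu$ in place of $\nu$; since $L^{\lambda\nu}_\cdot(\eps W) = \lambda L^\nu_\cdot(\eps W)$, any $s>0$ satisfying the Khasminskii condition $\sup_x f^{\lambda\nu,\eps}_s(x) \le \tfrac{1}{2}$ yields $\sup_x \E_x \exp(\lambda L^\nu_t(\eps W)) \le 2 e^{(\log 2)\, t/s}$. Comparing this to the target right-hand side of \eqref{aa}, I need to find such an $s$ with $1/s$ of order $4 c_0 N(\nu,\gamma)^2 \lambda^2 \eps^{-2}$ and verify that the resulting constraint is compatible with \eqref{char}.

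To produce a usable upper bound on $f^{\nu,\eps}_s(x)$ uniform in $x$, I would decompose $\Re$ around $x$ into the central strip $J_0 = [x-\gamma, x+\gamma]$ and the annuli
\[
J_k = \bigl\{y : (2k-1)\gamma < |y-x| \le (2k+1)\gamma\bigr\},\quad k \ge 1.
\]
Each $J_k$ with $k\ge 1$ is a union of two intervals of length $2\gamma$, so $\nu(J_k) \le 2 N(\nu,\gamma)$, while $\nu(J_0) \le N(\nu,\gamma)$. On $J_k$ the Gaussian factor in \eqref{char} is at most $\exp(-(2k-1)^2\gamma^2/(2u\eps^2))$, which is non-increasing in $u\in(0,s]$ and so can be replaced by its value at $u=s$. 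Combining this with the elementary identity $\int_0^s du/\sqrt{2\pi u\eps^2} = \eps^{-1}\sqrt{2s/\pi}$ and summing over $k$ gives
\[
\sup_x f^{\nu,\eps}_s(x) \le \frac{N(\nu,\gamma)}{\eps}\sqrt{\frac{2s}{\pi}}\Biggl(1 + 2\sum_{k=1}^\infty e^{-(2k-1)^2\gamma^2/(2s\eps^2)}\Biggr).
\]

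Under the auxiliary constraint $s\eps^2 \le \gamma^2$ every exponent in the series is at least $(2k-1)^2/2$, so the bracket is bounded by $1 + 2\sum_{k\ge 1}e^{-(2k-1)^2/2} = \sqrt{c_0\pi/2}$, and the whole estimate collapses to $N(\nu,\gamma)\sqrt{s c_0}/\eps$. The inequality $\lambda f^{\nu,\eps}_s(x)\le \tfrac{1}{2}$ then becomes $\lambda N(\nu,\gamma)\sqrt{s c_0}/\eps \le \tfrac{1}{2}$, which is saturated by precisely the value $s=\eps^2/(4\lambda^2 N(\nu,\gamma)^2 c_0)$ isolated in the first paragraph; plugging it into \eqref{expmom} yields the announced $(4\log 2)\,c_0 N(\nu,\gamma)^2 t\lambda^2\eps^{-2}$ in the exponent. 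The auxiliary constraint $s\eps^2\le \gamma^2$ translates into $\eps^4 \le 4\gamma^2\lambda^2 N(\nu,\gamma)^2 c_0$ and defines the admissible threshold $\eps_{\lambda,\gamma}$. The one genuinely delicate step is this annular decomposition together with the uniform tail bound for the Gaussian series at scale $\gamma/(\eps\sqrt{s})$; the rest of the proof is arithmetic organised around Khasminskii's lemma.
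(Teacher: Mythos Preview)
Your argument is correct and is essentially identical to the paper's: the same annular decomposition of $\mathbb{R}$ into intervals of length $2\gamma$ around $x$ (the paper indexes them by $k\in\mathbb{Z}$, you by $k\ge 0$ with a factor $2$), the same Gaussian tail bound under the constraint $s\eps^2\le\gamma^2$, the same choice $s=\eps^2/(4\lambda^2 N(\nu,\gamma)^2 c_0)$, and the same appeal to \eqref{expmom}. One harmless slip: the factor $\exp\bigl(-(2k-1)^2\gamma^2/(2u\eps^2)\bigr)$ is \emph{increasing} in $u$, not non-increasing, but your conclusion that it is bounded above by its value at $u=s$ is of course still right.
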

\begin{proof} If $\eps\sqrt{s}\leq \gamma$, then we have
\[
\ba f^{\nu, \eps}_s(x)&=\sum_{k\in\mathbb{Z}}
\int_0^s\int_{|y-x-2k\gamma
|\leq\gamma}
\frac{1}{\sqrt{2\pi v\eps^2}}e^{-\frac{(y-x)^2}{2v\eps
^2}}\nu(dy)\,dv
\\
&\leq\sqrt{c_0} N(\nu, \gamma)\sqrt{\frac{s}{\eps^2}}. \ea
\]
Take
\[
s=\bigl(2N(\nu, \gamma)\bigr)^{-2}(c_0)^{-1}
\lambda^{-2}\eps^2.\vadjust{\eject}
\]
Then the inequality $\eps\sqrt{s}\leq \gamma$ holds, provided that
\[
\eps\leq \bigl(\gamma\bigl(2N(\nu, \gamma)\bigr)^{2}c_0
\lambda^{2} \bigr)^{1/3}=:\eps_{\lambda, \gamma}.
\]
Under this condition,
\[
f^{\lambda\nu, \eps}_s(x)=\lambda f^{ \nu, \eps}_s(x)\leq
\frac{1}{2}.
\]
Now the required inequality follows immediately from \eqref{expmom}.
\end{proof}

In what follows, we will repeatedly decompose $\mu$ into sums of two
components and analyze separately the exponential moments of the local times
that correspond to these components. We will combine these estimates
and obtain an estimate for ${L_{t}^\mu(\eps W)}$ itself using the
following simple inequality. Let $\mu=\nu+\kappa$ and $p,q>1$ be such
that $1/p+1/q=1$. Then
\[
L_t^\mu(\eps W)=L_t^\nu(\eps
W)+L_t^\kappa(\eps W)=(1/p)L_t^{p\nu}(
\eps W)+(1/q)L_t^{q\kappa}(\eps W),
\]
and therefore by the H\"older inequality we get
\be\label{Hol}
\E e^{L_t^\mu(\eps W)}\leq \bigl(\E e^{L_t^{p\nu}(\eps W)} \bigr)^{1/p}
 \bigl(\E e^{L_t^{q\kappa}(\eps W)} \bigr)^{1/q}.
\ee
We will also use another version of this upper bound, which has the form
\be\label{HolA}
\E e^{L_t^\mu(\eps W)}1_A\leq \bigl(\E e^{L_t^{p\mu}(\eps W)}
\bigr)^{1/p}  \bigl(\P(A) \bigr)^{1/q}, \quad A\in\mathcal{F}.
\ee

We denote
\[
\varDelta=\sup_{x\in\Re} \mu\bigl(\{x\}\bigr).
\]
We will prove Theorem \ref{t1} in several steps, in each of them
extending the class of measures $\mu$ for which the required statement holds.

\subsection{Step I: $\mu$ is a finite mixture of $\delta$-measures}\label{s22}

If $\mu=a\delta_{z}$ is a weighted $\delta$-measure at the point $z$,
then we have
\[
L_t^\mu(\eps W)=a\eps^{-1} L_t^{(z)}(W),
\]
where
\[
L_t^{(z)}(W)=\lim_{\eta\to0}
{1\over2\eta}\int_0^t 1_{|W_s-z|\leq\eta
}
\, ds
\]
is the \emph{local time of a Wiener process} at the point $z$. The
distribution of $L_t^{(z)}(W)$ is well known; see, e.g., \cite
{ito-mckean}, Chapter 2.2 and expression (6) in Chapter 2.1. Hence, the
required statement in the particular case $\mu=a\delta_{z}$ is
straightforward, and we have the following:
\be\label{lim1}
\lim_{\eps\to0}\eps^2\sup_x \log \E_x e^{a\eps^{-1}L_t^{(z)}( W)}=\frac
{ta^2}{2}.
\ee
Note that in this formula the supremum is attained at the point $x=z$.

In this section, we will extend this result to the case where $\mu$ is
a finite mixture of $\delta$-measures, that is,
\[
\mu=\sum_{j=1}^k a_j
\delta_{z_j}.
\]
Let $j_*$ be the number of the maximal value in $\{a_j\}$, that is,
$\varDelta=a_{j_*}$. Then $L_t^\mu(\eps W)\geq\varDelta\eps
^{-1}L_t^{(z_{j_*})}(W)$, and it follows directly from \eqref{lim1} that
\be\label{lower}
\liminf_{\eps\to0}\eps^2 \sup_{x\in\Re}\log\E_x e^{L_t^\mu(\eps
W)}\geq\frac{t\varDelta^2}{2}.
\ee
In what follows, we prove the corresponding upper bound
\be\label{upper}
\limsup_{\eps\to0}\eps^2 \sup_{x\in\Re}\log\E_x e^{L_t^\mu(\eps
W)}\leq\frac{t\varDelta^2}{2},
\ee
which, combined with this lower bound, proves \eqref{lim}.

Observe that, for $\gamma>0$ small enough,
\[
N(\mu, \gamma)=\varDelta.
\]
Then by Lemma \ref{l2}, for any $\lambda\geq1$,
\be\label{kh}
\limsup_{\eps\to0}\eps^2 \sup_{x\in\Re}\log\E_x e^{\lambda L_t^\mu
(\eps W)}\leq c_1\lambda^2 t\varDelta^2
\ee
with
\[
c_1=(4 \log2)c_0={\frac{8\log2}{\pi}} \Biggl(1+2\sum
_{k=1}^\infty e^{-\frac{(2k-1)^2}{2}}
\Biggr)^2.
\]
In particular, taking $\lambda=1$, we obtain an upper bound of the form
\eqref{upper}, but with a worse constant $c_1$ instead of required
$1/2$. We will improve this bound by using the large deviations
estimates for $\eps W$, the Markov property, and the ``individual''
identities \eqref{lim1}.

Denote $\mu_j=a_j\delta_{z_j}, j=1, \dots,k$. Then
\[
L_t^\mu(\eps W)=\sum_{j=1}^k
L_t^{\mu_j}(\eps W).
\]
Fix some family of neighborhoods $O_j$ of $z_j, j=1, \dots, k$, such
that the minimal distance between them equals $\rho>0$, and denote
\[
O^j=\Re\setminus\bigcup_{i\neq j}O_i.
\]
For some $N\geq1$ whose particular value will be specified later,
consider the partition $t_n=t(n/N)$, $n=0, \dots,N$, of the segment
$[0, t]$ and denote
\[
B_{n, j}= \bigl\{f\in C(0,t): f_s\in O^j, s
\in[t_{n-1},t_n]\bigr\},\quad j\in\{1, \dots, k\}, \ n\in\{1,
\dots, N\},
\]
\[
C_{j_1, \dots, j_N}=\bigcap_{n=1}^N
B_{n, j_n}, \quad j_1, \dots, j_N\in\{1, \dots, k
\}.
\]
Observe that if the process $\eps W$ does not visit $O_j$ on the time
segment $[u,v]$, then $L^{\mu_j}(\eps W)$ on this segment stays
constant. This means that, on the set $\{\eps W\in C_{j_1, \dots, j_N}\}
$, we have
\[
L^{\mu}_t(\eps W)=\sum_{n=1}^N
\bigl(L^{\mu_{j_n}}_{t_n}(\eps W)-L^{\mu
_{j_{n}}}_{t_{n-1}}(
\eps W) \bigr).
\]
Because $L^{\mu_j}(\eps W)$ is a time-homogeneous additive functional
of the Markov process $\eps W$, we have
\[
E_x \bigl[ e^{L^{\mu_{j_n}}_{t_n}(\eps W)-L^{\mu_{j_{n}}}_{t_{n-1}}(\eps
W)}\big|\mathcal{F}_{t_{n-1}}
\bigr]=E_y e^{L^{\mu_{j_n}}_{t/N}(\eps W)} \Big|_{y=\eps W_{t_{n-1}}}.
\]
Then by \eqref{lim1}, for any $j_1, \dots, j_N\in\{1, \dots, k\}$,
\[
\limsup_{\eps\to0}\eps^2 \sup_{x\in\Re}
\log\E_x e^{L_t^\mu(\eps
W)}1_{\eps W\in C_{j_1, \dots, j_N}}\leq\frac{t}{2N} \sum
_{n=1}^N(a_{j_n})^2
\leq\frac{t\varDelta^2}{2}.
\]
Because we have a fixed number of sets $C_{j_1, \dots, j_N}$, this
immediately yields
\be\label{uploc}
\limsup_{\eps\to0}\eps^2 \sup_{x\in\Re}\log\E_x e^{L_t^\mu(\eps
W)}1_{\eps W\in C}\leq \frac{t\varDelta^2}{2}
\ee
with
\[
C=\bigcup_{j_1, \dots, j_N\in\{1, \dots, k\}} C_{j_1, \dots, j_N}.
\]
Hence, to get the required upper bound \eqref{upper}, it suffices to
prove an analogue of \eqref{uploc} with the set $C$ replaced by its
complement $D=C(0, t)\setminus C$.
Using \eqref{HolA} with $p=2$, $A=\{\eps W\in D\}$, and \eqref{kh} with
$\lambda=2$, we get
\[
\ba \limsup_{\eps\to0}\eps^2 \sup_{x\in\Re}
\log\E_x e^{L_t^\mu(\eps
W)}1_{\eps W\in D}\leq2c_1 t
\varDelta^2+{1\over2}\limsup_{\eps\to0}\eps^2 \sup
_{x\in\Re}\log\P_x (\eps W\in D). \ea
\]

By the LDP for the Wiener process (\cite{freidlin-wentzell}, Chapter 3,
\S2),
\[
\limsup_{\eps\to0}\eps^2 \sup_{x\in\Re}
\log\P_x (\eps W\in D)=-\inf_{f\in\mathrm{closure}(D)}I(f),
\]
where
\[
I(f)= \left\{ %
\begin{array}{ll}
(1/2)\int_0^t(f'_s)^2\, ds, & \hbox{$f$ is absolutely continuous on
$[0,t]$;} \\
+\infty & \hbox{otherwise.}
\end{array} %
\right.\]
For any trajectory $f\in D$, there exists $n$ such that $f$ visits at
least two sets $O_j$ on the time segment $[t_{n-1}, t_n]$. Therefore,
any trajectory $f\in\mathrm{closure}(D)$ exhibits an oscillation $\geq
\rho$ on this time segment. On the other hand, for an absolutely
continuous~$f$,
\[
|f_u-f_v|=\Biggl\llvert \int_u^v
f'_s\, ds\Biggr\rrvert \leq|u-v|^{1/2} \Biggl(
\int_0^t\bigl(f'_s
\bigr)^2\, ds \Biggr)^{1/2}.
\]
This means that, for any $f\in\mathrm{closure}(D)$,
\[
I(f)\geq\frac{\rho^2 N}{2t},
\]
which yields
\[
\limsup_{\eps\to0}\eps^2 \sup_{x\in\Re}
\log\E_x e^{L_t^\mu(\eps
W)}1_{\eps W\in D}\leq2c_1 t
\varDelta^2-\frac{\rho^2 N}{2t}.
\]
If in this construction, $N$ was chosen such that
\[
N\geq(4c_1-1)\rho^{-2}t^2
\varDelta^2,
\]
then the latter inequality guarantees the analogue of \eqref{uploc}
with $D$ instead of~$C$. This completes the proof of \eqref{upper}.

\subsection{Step II: $\mu$ is finite}\label{s23}
Exactly the same argument as that used in Section~\ref{s22} provides
the lower bound \eqref{lower}. In this section, we prove the upper
bound \eqref{upper} for a finite measure~$\mu$ and thus complete the
proof of the first assertion of the theorem. For ~finite $\mu$ and any
$\chi>0$, we can find $\gamma>0$ and decompose $\mu=\mu_0+\nu$ in such
a way that
$\mu_0$ is a finite mixture of $\delta$-measures and $N(\nu,\gamma
)<\chi$. Let $p,q>1$ be such that $1/p+1/q=1$. The measure $p\mu_0$ has
the maximal weight of an atom equal to $p\varDelta$. Since we have already
proved the required statement for finite mixtures of $\delta$-measures,
we have
\be\label{mu0}
\limsup_{\eps\to0}\eps^2\sup_{x\in\Re}\log \bigl(\E_x e^{L_t^{p\mu
_0}(\eps W)} \bigr)^{1/p}\leq\frac{t}{2}p\varDelta^2.
\ee
On the other hand, we have $N(\nu,\gamma)<\chi$ and then by Lemma \ref{l2}
\[
\limsup_{\eps\to0}\eps^2 \sup_{x\in\Re}
\log \bigl(\E_x e^{L_t^{q\nu
}(\eps W)} \bigr)^{1/q}\leq
c_1q t\chi^2.
\]
Hence, by \eqref{Hol},
\[
\limsup_{\eps\to0}\eps^2\sup_{x\in\Re}
\log\E_x e^{L_t^{\mu}(\eps
W)}\leq\frac{t}{2}p\varDelta^2+c_1q
t\chi^2.
\]
Now we can finalize the argument. Fix $
\varDelta_1>\varDelta:=\max_{x\in\Re} \mu(\{x\})^2
$
and choose $p,q>1$ such that $1/p+1/q=1$ and $p\varDelta^2<\varDelta_1^2$.
Then there exists $\chi>0$ small enough such that
\[
p\varDelta^2+2c_1q t\chi^2<
\varDelta_1^2.
\]
Taking the decomposition $\mu=\mu_0+\nu$ that corresponds to this value
of $\chi$ and applying the previous calculations, we obtain an analogue of
the upper bound \eqref{upper} with $\varDelta$ replaced by $\varDelta_1$.
Since $\varDelta_1>\varDelta$ is arbitrary, the same inequality holds for
$\varDelta$.

\subsection{Step III: $\mu$ is $\sigma$-finite}\label{s24} In this
section, we prove the second assertion of the theorem. As before, the
lower bound can be obtained directly from the case $\mu=a\delta_z$, and
hence we concentrate ourselves on the proof of the upper bound
\be\label{upper_sigma}
\limsup_{\eps\to0}\eps^2 \log\E_x e^{L_t^\mu(\eps W)}\leq\frac{t\varDelta
^2}{2},\quad x\in\Re.\vadjust{\eject}
\ee
We will use an argument similar to that from the previous section and
decompose $\mu$ into a sum $\mu=\mu_0+\nu$ with finite $\mu_0$ and $\nu
$, which is negligible in a~sense. However, such a decomposition relies
on the initial value $x$, and this is the reason why we obtain an
individual upper bound \eqref{upper_sigma} instead of the uniform one
\eqref{upper}.

Namely, for a given $x$, we define $\mu_0, \nu$ by restricting $\mu$ to
$[x-R,x+R]$ and its complement, respectively. Without loss of
generality, we assume that for each $R$, the corresponding $\nu$ is
nonzero. Since we have already proved the required statement for finite
measures, we get \eqref{mu0}.

Next, denote $M=\sup_{x\in\Re}\mu([x-1, x+1])$ and observe that $N(\nu
,1)\leq M$. Then by Lemma \ref{l2} with $\gamma=1$ and the strong
Markov property, for any stopping time $\tau$, the exponential moment
of $ L_{t}^{q\nu}(\eps W)$ conditioned by $\mathcal{F}_\tau$ is
dominated by $2e^{c_1 M^2t q^2\eps^{-2}}$.
This holds for $\eps\leq\eps_{q,1}^{x,R}$, where we put the indices
$x,R$ in order to emphasize that this constant depends on $\nu$, which,
in turn, depends on $x,R$. Since we have assumed that, for any $x,R$,
the respective $\nu$ is nonzero, the constants $\eps_{q,1}^{x,R}$ are
strictly positive.

Now we take by $\tau$ the first time moment when $|\eps W_\tau-x|=R$.
Observe that $L_{t}^{\nu}(\eps W)$ equals $0$ on the set $\{\tau>t\}$ and
it is well known that
\[
\P_x(\tau<t)\leq4\P_x(\eps W_t>R)\leq
Ce^{-tR^2\eps^{-2}/2}.
\]
Summarizing the previous statements, we get
\[
\E_x e^{L_{t}^{q\nu}(\eps W)}\leq1+2Ce^{t\eps^{-2}(c_1 M^2
q^2-R^2/2)}, \quad\eps\leq
\eps^{x,R}_{\lambda,1},
\]
which implies
\be\label{bound}
\limsup_{\eps\to0}\eps^2 \log \bigl(\E_x e^{L_t^{q\nu}(\eps W)}
\bigr)^{1/q}\leq t\bigl(c_1 M^2 q-R^2/(2q)\bigr)_+,
\ee
where we denote $a_+=\max(a,0)$. By \eqref{Hol} inequalities \eqref
{mu0} and \eqref{bound} yield
\[
\limsup_{\eps\to0}\eps^2\log\E_x
e^{L_t^{\mu_0}(\eps W)}\leq\frac
{t}{2}p\varDelta^2+t\bigl(c_1
M^2 q-R^2/(2q)\bigr)_+.
\]

Now we finalize the argument in the same way as we did in the previous
section. Fix
$
\varDelta_1>\varDelta
$
and take $p>1$ such that $p\varDelta^2\leq\varDelta_1^2$. Then take $R$
large enough so that, for the corresponding $q$,
\[
c_1 M^2 q-R^2/(2q)\leq0.
\]
Under such a choice, the calculations made before yield \eqref
{upper_sigma} with $\varDelta$ replaced by $\varDelta_1$. Since $\varDelta
_1>\varDelta$ is arbitrary, the same inequality holds for $\varDelta$.

\section{Example}\label{s3} Let
\[
\mu=\sum_{k=1}^\infty (\delta_{k^2}+
\delta_{k^2+2^{-k}} ).
\]
Then $\mu$ satisfies \eqref{locfin} and $\varDelta=1$. However, it is an
easy observation that when the initial value $x$ is taken in the form
$x_k=k^2$, the respective exponential moments satisfy
\[
\E_{x_k}e^{L_t^\mu(\eps W)}\to\E_0e^{L_t^\nu(\eps W)}, \quad k
\to \infty,\vadjust{\eject}
\]
with $\nu=2\delta_0$. Then
\[
\liminf_{\eps\to0}\eps^2 \sup_{x\in\Re}
\log\E_x e^{L_t^\mu(\eps
W)}\geq\liminf_{\eps\to0}
\eps^2 \log\E_0 e^{L_t^\nu(\eps W)}=2t>\frac{t}{2},
\]
and therefore \eqref{lim} fails.
\section*{Acknowledgments} The first author gratefully acknowledges the
DFG Grant Schi~419/8-1 and the joint DFFD-RFFI project No. 09-01-14.


\begin{thebibliography}{9}

\bibitem{chiang-sheu1}
%
\begin{barticle}
\bauthor{\bsnm{Chiang}, \binits{T.S.}},
\bauthor{\bsnm{Sheu}, \binits{S.J.}}:
\batitle{Large deviations of diffusion processes with discontinuous
drift and
their occupation times}.
\bjtitle{Ann. Probab.}
\bvolume{28},
\bfpage{140}--\blpage{165}
(\byear{2000}).
\bid{doi={10.1214/\\aop/1019160115}, mr={1756001}}
\end{barticle}
%
%
\OrigBibText
%
\begin{barticle}
\bauthor{\bsnm{Chiang}, \binits{T.S.}},
\bauthor{\bsnm{Sheu}, \binits{S.J.}}:
\batitle{Large deviations of diffusion processes with discontinuous
drift and
their occupation times}.
\bjtitle{Ann. Probab.}
\bvolume{28},
\bfpage{140}--\blpage{165}
(\byear{2000})
\end{barticle}
%
\endOrigBibText
\bptok{structpyb}%
\endbibitem

\bibitem{chiang-sheu2}
%
\begin{barticle}
\bauthor{\bsnm{Chiang}, \binits{T.S.}},
\bauthor{\bsnm{Sheu}, \binits{S.J.}}:
\batitle{Small perturbations of diffusions in inhomogeneous media}.
\bjtitle{Ann. Inst. Henri Poincar\'{e}}
\bvolume{38},
\bfpage{285}--\blpage{318}
(\byear{2002}).
\bid{doi={10.1016/S0246-\\0203(01)01101-3}, mr={1899455}}
\end{barticle}
%
%
\OrigBibText
%
\begin{barticle}
\bauthor{\bsnm{Chiang}, \binits{T.S.}},
\bauthor{\bsnm{Sheu}, \binits{S.J.}}:
\batitle{Small perturbations of diffusions in inhomogeneous media}.
\bjtitle{Ann. Inst. Henri Poincar\'e}
\bvolume{38},
\bfpage{285}--\blpage{318}
(\byear{2002})
\end{barticle}
%
\endOrigBibText
\bptok{structpyb}%
\endbibitem

\bibitem{dynkin}
%
\begin{bbook}
\bauthor{\bsnm{Dynkin}, \binits{E.B.}}:
\bbtitle{Markov Processes}.
\bpublisher{Springer},
\blocation{Berlin, G\"{o}ttingen, Heidelberg}
(\byear{1965}).
\bid{mr={0193671}}
\end{bbook}
%
%
\OrigBibText
%
\begin{bbook}
\bauthor{\bsnm{Dynkin}, \binits{E.B.}}:
\bbtitle{Markov Processes}.
\bpublisher{Springer},
\blocation{Berlin, G\"{o}ttingen, Heidelberg}
(\byear{1965})
\end{bbook}
%
\endOrigBibText
\bptok{structpyb}%
\endbibitem

\bibitem{freidlin-wentzell}
%
\begin{bbook}
\bauthor{\bsnm{Freidlin}, \binits{M.I.}},
\bauthor{\bsnm{Wentzell}, \binits{A.D.}}:
\bbtitle{Random Perturbations of Dynamical Systems}.
\bpublisher{Springer},
\blocation{New York}
(\byear{1984}).
\bid{doi={10.1007/978-1-4684-0176-9}, mr={0722136}}
\end{bbook}
%
%
\OrigBibText
%
\begin{bbook}
\bauthor{\bsnm{Freidlin}, \binits{M.I.}},
\bauthor{\bsnm{Wentzell}, \binits{A.D.}}:
\bbtitle{Random Perturbations of Dynamical Systems}.
\bpublisher{Springer},
\blocation{New York}
(\byear{1984})
\end{bbook}
%
\endOrigBibText
\bptok{structpyb}%
\endbibitem

\bibitem{ito-mckean}
%
\begin{bbook}
\bauthor{\bsnm{It\^{o}}, \binits{K.}},
\bauthor{\bsnm{McKean}, \binits{H.P.}}:
\bbtitle{Diffusion Processes and Their Sample Paths}.
\bpublisher{Springer}
(\byear{1965}).
\bid{mr={0345224}}
\end{bbook}
%
%
\OrigBibText
%
\begin{bbook}
\bauthor{\bsnm{It\^{o}}, \binits{K.}},
\bauthor{\bsnm{McKean}, \binits{H.P.}}:
\bbtitle{Diffusion Processes and their Sample Paths}.
\bpublisher{Springer}
(\byear{1965})
\end{bbook}
%
\endOrigBibText
\bptok{structpyb}%
\endbibitem

\bibitem{krykun}
%
\begin{barticle}
\bauthor{\bsnm{Krykun}, \binits{I.H.}}:
\batitle{Large deviation principle for stochastic equations with local time}.
\bjtitle{Theory Stoch. Process.}
\bvolume{15}(\bissue{31}),
\bfpage{140}--\blpage{155}
(\byear{2009}).
\bid{mr={2598533}}
\end{barticle}
%
%
\OrigBibText
%
\begin{barticle}
\bauthor{\bsnm{Krykun}, \binits{I.H.}}:
\batitle{Large deviation principle for stochastic equations with local time}.
\bjtitle{Theory of Stochastic Processes}
\bvolume{15}(31),
\bfpage{140}--\blpage{155}
(\byear{2009})
\end{barticle}
%
\endOrigBibText
\bptok{structpyb}%
\endbibitem

\bibitem{kulik-sobolieva}
%
\begin{barticle}
\bauthor{\bsnm{Kulik}, \binits{A.M.}},
\bauthor{\bsnm{Soboleva}, \binits{D.D.}}:
\batitle{Large deviations for one-dimensional SDE with discontinuous diffusion
coefficient}.
\bjtitle{Theory Stoch. Process.}
\bvolume{18}(34),
\bfpage{101}--\blpage{110}
(\byear{2012}).
\bid{mr={3124766}}
\end{barticle}
%
%
\OrigBibText
%
\begin{barticle}
\bauthor{\bsnm{Kulik}, \binits{A.M.}},
\bauthor{\bsnm{Soboleva}, \binits{D.D.}}:
\batitle{Large deviations for one-dimensional SDE with discontinuous diffusion
coefficient}.
\bjtitle{Theory of Stochastic Processes}
\bvolume{18}(34),
\bfpage{101}--\blpage{110}
(\byear{2012})
\end{barticle}
%
\endOrigBibText
\bptok{structpyb}%
\endbibitem

\bibitem{sobolieva}
%
\begin{barticle}
\bauthor{\bsnm{Sobolieva}, \binits{D.D.}}:
\batitle{Large deviations for one-dimensional SDEs with discontinuous
coefficients}.
\bjtitle{Theory Stoch. Process.}
\bvolume{18}(\bissue{34}),
\bfpage{102}--\blpage{108}
(\byear{2012}).
\bid{mr={3124779}}
\end{barticle}
%
%
\OrigBibText
%
\begin{barticle}
\bauthor{\bsnm{Sobolieva}, \binits{D.D.}}:
\batitle{Large deviations for one-dimensional SDEs with discontinuous
coefficients}.
\bjtitle{Theory of Stochastic Processes}
\bvolume{18}(34),
\bfpage{102}--\blpage{108}
(\byear{2012})
\end{barticle}
%
\endOrigBibText
\bptok{structpyb}%
\endbibitem

\bibitem{sznitman}
%
\begin{bbook}
\bauthor{\bsnm{Sznitman}, \binits{A.S.}}:
\bbtitle{Brownian Motion, Obstacles and Random Media}.
\bpublisher{Springer},
\blocation{Germany}
(\byear{1998}).
\bid{doi={10.1007/978-3-662-11281-6}, mr={1717054}}
\end{bbook}
%
%
\OrigBibText
%
\begin{bbook}
\bauthor{\bsnm{Sznitman}, \binits{A.S.}}:
\bbtitle{Brownian Motion, Obstacles and Random Media}.
\bpublisher{Springer},
\blocation{Germany}
(\byear{1998})
\end{bbook}
%
\endOrigBibText
\bptok{structpyb}%
\endbibitem

\end{thebibliography}
\end{document}